\documentclass[11pt]{amsart} 
\usepackage{amsmath, amssymb, amsthm}

\theoremstyle{plain}
\newtheorem{theorem}{Theorem}
\newtheorem{lemma}[theorem]{Lemma}
\newtheorem{proposition}[theorem]{Proposition}
\newtheorem{corollary}[theorem]{Corollary}
\newtheorem{claim}[theorem]{Claim}
\theoremstyle{definition}
\newtheorem{definition}[theorem]{Definition}
\newtheorem{example}[theorem]{Example}

\theoremstyle{remark}
\newtheorem{remark}[theorem]{Remark}

\newcommand{\Aut}{\operatorname{Aut}}

\title{A note on SG points for reduced plane curves}

\author{Aki Ikeda}
\address{Graduate School of Science and Technology, Niigata University, 8050 \\ Ikarashi 2-no-cho, Nishi-ku, Niigata 950-2181, Japan}
\email{ikeda@m.sc.niigata-u.ac.jp}

\author{Takeshi Takahashi}
\address{Faculty of Engineering, Niigata University, 8050 Ikarashi 2-no-cho, Nishi-ku, Niigata 950-2181, Japan}
\email{takeshi@eng.niigata-u.ac.jp}

\keywords{Galois point, simultaneous Galois point, plane curve, reducible curve, projection from a point, dual curve}

\subjclass[2020]{14H50, 14N05, 14H52}

\begin{document}

\begin{abstract}
In \cite{IT24}, we generalized the concept of Galois points for irreducible plane curves to the case of reduced plane curves. We also introduced the concept of simultaneous Galois points, which is an equivalent concept to Galois points, and studied their number when the irreducible components are nonsingular. In this paper, we consider the remaining cases where the irreducible components are of degree $d=2$ or $3$. For the case of $d=2$, we establish a generalized version of the theorem in our previous paper. For the case of $d=3$, we classify simultaneous Galois points into the first and second kinds. We give a necessary condition for the former and provide examples for the latter.
\end{abstract}

\maketitle

\section{Introduction}
	Let $k$ be an algebraically closed field of characteristic $0$. Let $C \subset \mathbb{P}^2$ be a reduced (not necessarily irreducible) plane curve over $k$, and write $C=\bigcup_{i=1}^n C_i$ for its decomposition into irreducible components. For a point $P \in \mathbb{P}^2$, we denote by $\pi_P$ the projection from $P$, and by $\pi_P^*: k(\mathbb{P}^1) \hookrightarrow R(C)$ the monomorphism induced by $\pi_P$, where $k(\mathbb{P}^1)$ is the function field of $\mathbb{P}^1$, and $R(C)$ is the function ring of $C$.

We generalize the concept of Galois points, originally introduced for irreducible plane curves by Hisao Yoshihara (see \cite{MY00}, \cite{Yos01}).

\begin{definition}[Ikeda and Takahashi {\cite[Definition 1.3]{IT24}}]  \label{Def of G-Galois point}
	A point $P \in \mathbb{P}^2$ is called a {\itshape Galois point} for $C$ if there exists a finite group $G$  such that the function ring $R(C)$ is a Galois $G$-algebra over $\pi_P^* k(\mathbb{P}^1)$.
\end{definition}

The concept of SG points is equivalent to that of Galois points for reduced plane curves.

\begin{definition}[{\cite[Definition 1.4]{IT24}}] \label{Def of SG point}
	A point $P \in \mathbb{P}^2$ is called a {\itshape simultaneous Galois point} (``SG point'' for short) for $C$ if every field extension $k(C_i)/\pi_P^* k(\mathbb{P}^1)$ ($i=1, \dots, n$) is Galois and $k(C_i) \simeq k(C_j)$ as $\pi_P^* k(\mathbb{P}^1)$-algebra for $1\le i, j \le n$, where $k(C_i)$ is the function field of $C_i$.
\end{definition}

\begin{remark}\label{Remark for Galois points vs SG points}
	For a point $P \in \mathbb{P}^2$, $P$ is a Galois point for $C$ if and only if $P$ is an SG point for $C$ (cf.\ \cite[Lemma~2.4 and Lemma~2.6]{IT24}).
\end{remark}

By the definition of SG points, we can immediately see the following lemma from a geometric viewpoint of SG points.

\begin{lemma}[{\cite[Lemma~2.7]{IT24}}]\label{Lemma: Geometric property of SG point}
	A point $P \in \mathbb{P}^2$ is an SG point for $C=\bigcup_{i=1}^n C_i$ if and only if the following conditions are satisfied\textup{:}
	\begin{enumerate}
		\item the field extension $k(C_i)/\pi_P^* k(\mathbb{P}^1)$ is Galois \textup{(}$i=1,\dots,n$\textup{)}\textup{;}
		\item there exists a birational map $\phi_{i,j}: C_i \dashrightarrow C_j$ such that $\pi_P \circ \phi_{i,j} = \pi_P$ \textup{(}$1 \leq i, j \leq n$\textup{)}.
	\end{enumerate}
\end{lemma}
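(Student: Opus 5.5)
The plan is to reduce the lemma to the standard equivalence of categories between function fields of transcendence degree one over $k$ and birational classes of irreducible curves. Condition (1) is literally the Galois part of Definition~\ref{Def of SG point}, so only the second condition needs work. We must show that $k(C_i)\simeq k(C_j)$ as $\pi_P^*k(\mathbb{P}^1)$-algebras if and only if there is a birational map $\phi_{i,j}\colon C_i\dashrightarrow C_j$ with $\pi_P\circ\phi_{i,j}=\pi_P$.

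Write $K=\pi_P^*k(\mathbb{P}^1)$. The projection $\pi_P$ restricts to a dominant rational map $\pi_P|_{C_i}\colon C_i\dashrightarrow\mathbb{P}^1$. This requires $C_i$ not to be a line through $P$, which is implicit in the definition, since otherwise $\pi_P^*$ would not land in $k(C_i)$. This restricted map induces the inclusion $K\hookrightarrow k(C_i)$.

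For the forward direction, suppose $\sigma\colon k(C_j)\to k(C_i)$ is a $k$-algebra isomorphism restricting to the identity on $K$. By the contravariant equivalence between dominant rational maps of irreducible varieties and $k$-embeddings of function fields, $\sigma$ equals $\phi^*$ for a unique dominant rational map $\phi\colon C_i\dashrightarrow C_j$. This $\phi$ is birational because $\sigma$ is an isomorphism.

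The condition that $\sigma$ fixes $K$ reads $\phi^*\circ(\pi_P|_{C_j})^*=(\pi_P|_{C_i})^*$ on $k(\mathbb{P}^1)$. By functoriality this gives $(\pi_P|_{C_j}\circ\phi)^*=(\pi_P|_{C_i})^*$. Faithfulness of the correspondence then yields $\pi_P\circ\phi=\pi_P$ as rational maps $C_i\dashrightarrow\mathbb{P}^1$.

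For the converse, take $\phi^*$ for the given $\phi_{i,j}$. It is a $k$-isomorphism $k(C_j)\to k(C_i)$, and the same functoriality identity shows that it fixes $K$. Its inverse gives the required isomorphism in the direction $k(C_i)\simeq k(C_j)$. The orientation of the isomorphism does not matter, since the relation is symmetric.

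The only point requiring care is making sense of $\pi_P\circ\phi_{i,j}$ as rational maps, i.e.\ checking that compositions are defined. This holds because $\phi_{i,j}$ is dominant, and $\pi_P|_{C_j}$ is defined on a dense open subset of $C_j$. Everything else is formal, so I do not expect any real obstacle.
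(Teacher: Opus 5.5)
Your proposal is correct and follows essentially the paper's route. The paper gives no separate argument, only that the lemma is immediate from Definition~\ref{Def of SG point}, and the content behind that is exactly the anti-equivalence between function fields and dominant rational maps that you invoke: the identity $\phi^*\circ(\pi_P|_{C_j})^*=(\pi_P|_{C_i})^*$ translates the $\pi_P^*k(\mathbb{P}^1)$-algebra condition into $\pi_P\circ\phi_{i,j}=\pi_P$.
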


An SG point is said to be {\itshape inner} (resp.\ {\itshape outer}) if $P \in \bigcap_{i=1}^{n} C_i$ (resp.\ $P \notin \bigcup_{i=1}^{n} C_i$). We denote by $S\Delta(C_1, \dots, C_n)$ (resp.\ $S\Delta_{\mathrm{in}}(C_1, \dots, C_n)$, $S\Delta_{\mathrm{out}}(C_1, \dots, C_n)$) the set of all SG points (resp.\ inner SG points, outer SG points).

\begin{remark}\label{Remark for the assumption of nonsingular}
	Let $C=\bigcup_{i=1}^n C_i$ be a reduced plane curve whose irreducible components are nonsingular. 
	\begin{enumerate}
	\item By \cite[Lemma~2.10]{IT24}, to study the SG points for $C$, it is sufficient to consider the inner and outer SG points, respectively.
	\item Assume that there exists an SG point for $C$. Then it follows from Lemma~\ref{Lemma: Geometric property of SG point} that for $1 \leq i<j\leq n$, there exists a birational map $\phi_{i,j}: C_i \dashrightarrow C_j$, and hence $C_i$ and $C_j$ have the same genus. Because the irreducible components are nonsingular, $C_i$ and $C_j$ have the same degree. Hence, if $\deg C_i \ne \deg C_j$ for some $i, j$, then  $S\Delta(C_1, \dots, C_n) = \emptyset$. Therefore, in the study of SG points for $C$, it is natural to assume that the irreducible components have the same degree $d$.
	\end{enumerate}
\end{remark}

In \cite{IT24}, we studied the number of SG points for reduced plane curves where the irreducible components are nonsingular.

\begin{theorem}[Ikeda and Takahashi {\cite[Theorem~1.5, Lemma~2.9, Theorem~5.5]{IT24}}] \label{Theorem IT24}
	Assume that $C_i$ \textup{(}$i=1,\ldots,n$\textup{)} is a nonsingular curve of degree $d$.
	\begin{enumerate}
		\item Assume that $n=2$. If $d=2$, then $\# S\Delta_{\mathrm{out}}(C_1, C_2) = 0, 1, 3$, or $6$.
		\item If $d \geq 3$, then $\# S\Delta_{\mathrm{out}}(C_1,\ldots,C_n)\leq 1$.
		\item If $d \geq 5$, then $\# S\Delta_{\mathrm{in}}(C_1,\ldots,C_n)\leq 1$.
		\item If $d=4$, then $\# S\Delta_{\mathrm{in}}(C_1,\ldots,C_n)\leq 2$.
		\item If $d \geq 4$, then $\# S\Delta_{\mathrm{in}}(C_1,\ldots,C_n)=0$ or $\# S\Delta_{\mathrm{out}}(C_1,\ldots,C_n)=0$.
	\end{enumerate}
\end{theorem}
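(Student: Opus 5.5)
The plan is to treat the five items separately. The common reduction uses Lemma~\ref{Lemma: Geometric property of SG point}: $P$ is an SG point for $C$ exactly when $P$ is a Galois point for each $C_i$ and the projections $\pi_P|_{C_i}$ are identified by birational maps $\phi_{i,j}$ over $\mathbb{P}^1$. Since the components are nonsingular of degree $d$, a birational map between them is an isomorphism. For $d\ge 3$ it moreover extends to a projective linear transformation, because the $g^2_d$ is unique for smooth plane curves of degree $d\ge 4$, and for $d=3$ one argues with the hyperplane class. In particular $S\Delta_{\mathrm{out}}(C_1,\dots,C_n)\subset\bigcap_i\Delta_{\mathrm{out}}(C_i)$ and $S\Delta_{\mathrm{in}}(C_1,\dots,C_n)\subset\bigcap_i\Delta_{\mathrm{in}}(C_i)$. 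The known counts for a single smooth curve then give upper bounds:
\begin{itemize}
\item outer Galois points number at most $3$ for $d\ge 3$ (Yoshihara);
\item inner Galois points number at most $1$ for $d\ge5$ and at most $4$ for $d=4$;
\item a smooth curve with $d\ge4$ never has both inner and outer Galois points.
\end{itemize}
Item~(5) follows at once from the last point.

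For item~(2), I take two distinct outer SG points $P,Q$ and use the lines through them. An outer Galois point $P$ of $C_i$ has a cyclic Galois group of order $d$, and its generator is a homology with center $P$ preserving $C_i$. By the fibre-compatibility, the map $\phi_{1,j}$ sends the fibre $\pi_P^{-1}(t)\cap C_1$ to $\pi_P^{-1}(t)\cap C_j$. Composing the homologies and the linear maps $\phi$ along the lines through $P$, I expect to show that $C_j$ is the image of $C_1$ under a homology $h_P$ with center $P$ commuting with the Galois group. Doing the same at $Q$ gives $C_j=h_Q(C_1)$. Then $h_Q^{-1}h_P$ lies in the stabilizer of $C_1$ and fixes the pencils. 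Working in coordinates where $C_1$ is $x^d+y^d+z^d=0$ (the form forced when there are $3$ outer points) or in the normal form with two outer points, I then aim to show that $h_P$ and $h_Q$ must coincide with a scalar, so $C_j=C_1$, a contradiction. This compatibility computation is the main obstacle. I would first try to show directly that $\phi_{1,j}$ is the identity on the common axis data.

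Item~(1) is classical and computational. The $\phi$'s are projective transformations mapping one conic to the other and preserving lines through $P$. An outer point $P$ is an SG point for two conics exactly when the involution with center $P$ and axis the polar of $P$ is the same for both conics. Equivalently, $P$ and its common polar line are harmonic for both, i.e.\ $P$ is a vertex of the common self-polar triangle of the pencil spanned by $C_1,C_2$. Classifying pencils of conics by the Segre symbol then gives the counts $0,1,3,6$: generic gives $3$, degenerate types give $0$ or $1$, and for $C_2=C_1$ every non-point gives infinitely many, so I assume $C_1\ne C_2$. I still need to check how the count $6$ arises, presumably from special configurations where extra harmonic coincidences occur; I would determine this by case analysis on the Segre symbol.

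For items~(3) and~(4), bound by the single-curve counts. For $d\ge5$ one gets $\le1$ immediately. For $d=4$, a quartic has at most $4$ inner Galois points (exactly $4$ only for Fermat-type $x^4 + \dots$ configurations), and the SG condition forces $P\in C_1\cap C_2$ together with compatible projections. I would show that when $C_1$ has $4$ inner Galois points they lie on a line, and that the linear map $\phi$ fixing three of them in compatible fashion is the identity, so that at most $2$ can be shared.
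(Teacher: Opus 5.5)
There is a genuine gap. Two of your key claims are false, and for (2) and (4) the proposal stops where the real work begins. (The paper itself only cites IT24 for (2)--(5); it reproves and generalizes (1) via dual conics.)

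\textbf{Item (1).} Your criterion for an outer SG point is wrong. Since $\pi_P|_{C_i}$ has degree $2$, the extension $k(C_i)/\pi_P^*k(\mathbb{P}^1)$ is determined by its two branch points. So an outer $P$ is an SG point iff the two tangent lines from $P$ to $C_1$ are also the two tangent lines from $P$ to $C_2$. Dually, this holds iff the line $\hat P\subset\hat{\mathbb{P}}^2$ passes through two distinct points of $\hat C_1\cap\hat C_2$. This is exactly the bijection $\mathcal{L}(\hat C_1,\hat C_2)\to S\Delta_{\mathrm{out}}(C_1,C_2)$ behind Theorem~\ref{Theorem SG points quadrics}, and it gives $\binom{m}{2}$ with $m=\#(\hat C_1\cap\hat C_2)\le 4$. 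In particular the \emph{generic} count is $6$ (the six vertices of the quadrilateral of the four common tangents), not $3$.

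Having the same harmonic involution is a different condition. For $C_1:X^2+Y^2=Z^2$, $C_2:aX^2+bY^2=Z^2$ and $P=(0:0:1)$, both involutions are $Z\mapsto -Z$. Yet $k(C_1)\simeq k(C_2)$ over $k(X/Y)$ only if $a=b$, and the six SG points lie in pairs on the sides of the common self-polar triangle, not at its vertices. You have confused ``the Galois groups coincide'' with ``the extensions are isomorphic over the base''. With that criterion, no Segre-symbol case analysis can produce $0,1,3,6$.

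\textbf{Items (2)--(5).} The single-curve fact you use for (5) is false. The curve $C:X^d+Y^d+YZ^{d-1}=0$ is smooth for all $d\ge 3$. The point $(0:0:1)\in C$ is an inner Galois point ($z^{d-1}=-(x^d+1)$ over $k(x)$), and $(1:0:0)\notin C$ is an outer one ($x^d=-y(y^{d-1}+1)$ over $k(y)$). For $d=4$ this is the quartic whose four inner Galois points are its points on $X=0$, so it also shows that (4) fails for $n=1$.

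Hence (4) and (5) genuinely concern two \emph{distinct} components. What is missing is the argument that two central collineations with different centres, both carrying $C_1$ onto $C_2$, force $C_2=C_1$. For (2) the two centres are outer, for (4) both are inner, and for (5) one is inner and one outer. This has to be computed in a normal form of $C_1$: Fermat for (2), $YZ^3+X^4+Y^4$ for (4).
\begin{itemize}
\item For (2) you explicitly leave this open, and $h_Q^{-1}h_P$ merely stabilizes $C_1$; it does not fix the pencils.
\item For (4), ``the linear map fixing three of them is the identity'' is not an argument, since there are three different collineations, one per centre.
\end{itemize}
Only (3) is complete as written.

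Finally, your blanket claim that for $d=3$ a $\pi_P$-compatible isomorphism extends to a projective transformation is false for inner $P$; see the map $\sigma_P$ and Example~\ref{Example 2nd kind}. It holds only for $P\notin C_1\cup C_2$ (Lemma~\ref{Lemma birat map extend to Proj trans}), which is all that (2) needs.
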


\begin{remark}
	Under the assumption of Theorem~\ref{Theorem IT24}, we have the following.
	\begin{enumerate}
		\item If $d=1$, then $S\Delta_{\mathrm{in}}(C_1, \dots, C_n) = \emptyset$ and $S\Delta_{\mathrm{out}}(C_1, \dots, C_n) = \mathbb{P}^2 \setminus \bigcup_{i=1}^{n} C_i$.
		\item If $d=2$, then $S\Delta_{\mathrm{in}}(C_1, \dots, C_n) = \bigcap_{i=1}^{n} C_i$.
	\end{enumerate}
\end{remark}

The purpose of this paper is to prove the following theorem, which is a generalization of  Theorem~\ref{Theorem IT24}~(a) (see also \cite[the proof of Theorem~3.1]{IT24}).

\begin{theorem}\label{Theorem SG points quadrics}
	Assume that $n\ge 2$ and that $C_i$ \textup{(}$i=1,\ldots,n$\textup{)} is a nonsingular curve of degree $d=2$. Let $\hat{C}_i$ \textup{(}$i=1,\ldots,n$\textup{)} be the dual curve of $C_i$, and 
	\[
		\mathcal{L}(\hat{C}_1, \ldots , \hat{C}_n)
		 = \{ \overline{pq} \subset \hat{\mathbb{P}}^2 \mid p, q \in \bigcap_{i=1}^{n} \hat{C}_i,\ p \ne q \},
	\]
	where $\overline{pq}$ is the line passing through $p$ and $q$.
	Then the map 
	\[
		\Phi: \mathcal{L}(\hat{C}_1,\dots , \hat{C}_n)\to S\Delta_{\mathrm{out}}(C_1, \dots, C_n)
	\] 
	defined by $\Phi(aX+bY+cZ)=(a:b:c)$ is a bijection. In particular, 
	\begin{align*}
		\# S\Delta_{\mathrm{out}}(C_1, \ldots , C_n)	 = 
		\begin{cases}
			0, & \text{if $m=0, 1$,}\\
			1, & \text{if $m=2$,}\\
			3, & \text{if $m=3$,}\\
			6, & \text{if $m=4$,}\\
		\end{cases}
	\end{align*}
	where $m = \# \bigcap_{i=1}^{n} \hat{C}_i$. 
\end{theorem}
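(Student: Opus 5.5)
The plan is to first understand outer SG points of a single conic, then impose the compatibility condition between components. For a nonsingular conic $C_i$ and $P\notin C_i$, the projection $\pi_P:C_i\to\mathbb{P}^1$ has degree $2$, so $k(C_i)/\pi_P^*k(\mathbb{P}^1)$ is automatically Galois. The extension is generated by the involution $\sigma_{i,P}$ of $C_i$ exchanging the two points of each line through $P$. This involution is the restriction of the harmonic homology of $\mathbb{P}^2$ with center $P$ and axis the polar line $\ell_{i,P}$ of $P$ with respect to $C_i$. Hence condition (a) of Lemma~\ref{Lemma: Geometric property of SG point} always holds, and everything rests on condition (b).

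Next we characterize (b) for a pair of conics $C_i, C_j$ and $P$ outside both. A birational map $\phi_{i,j}$ with $\pi_P\circ\phi_{i,j}=\pi_P$ is an isomorphism of double covers of $\mathbb{P}^1$. Such an isomorphism exists if and only if the two branch loci coincide. The branch points of $\pi_P|_{C_i}$ are exactly the two tangent lines to $C_i$ through $P$. Indeed, a double cover $\mathbb{P}^1\to\mathbb{P}^1$ is determined up to isomorphism over the base by its two branch points, since the cover is the normalization of $\mathbb{P}^1$ in $k(\mathbb{P}^1)(\sqrt{f})$ with $f$ having divisor supported on the branch points. Thus $P$ is an SG point if and only if the pair of tangent lines from $P$ to $C_i$ is the same for all $i$. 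In dual terms, $P$ corresponds to the line $P^\vee\subset\hat{\mathbb{P}}^2$, and the tangent lines through $P$ are the two points of $P^\vee\cap\hat{C}_i$. These are two distinct points because $P\notin C_i$: $P^\vee$ is tangent to $\hat{C}_i$ exactly when $P\in C_i$, using that the dual of a conic is a conic and biduality holds in characteristic $0$. So $P$ is an outer SG point if and only if $P^\vee\cap\hat{C}_1=\dots=P^\vee\cap\hat{C}_n=\{p,q\}$ with $p\neq q$. This happens if and only if $P^\vee=\overline{pq}$ for two distinct points $p,q\in\bigcap_i\hat{C}_i$.

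This gives the bijection, with $\Phi$ as stated being the inverse of $P\mapsto P^\vee$. We must check that $\Phi$ lands in outer SG points. Take $L=\overline{pq}$ with $p,q\in\bigcap\hat{C}_i$. By Bézout, $L\cap\hat{C}_i=\{p,q\}$ transversally, so $\Phi(L)\notin C_i$ for every $i$, and the tangent pairs agree. Injectivity of $\Phi$ is clear. For the count, distinct conics $\hat{C}_i$ meet in $m\le 4$ points. The lines through pairs of them number $\binom{m}{2}$, giving $0,0,1,3,6$. Distinct pairs give distinct lines, because no three of the points are collinear: a line meets a conic in at most two points.

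The main obstacle I expect is the rigorous justification that isomorphism over $\pi_P^*k(\mathbb{P}^1)$ is equivalent to equality of branch loci. This equivalence is standard via Kummer theory for quadratic extensions: $k(C_i)=k(t)(\sqrt{f_i})$, and $k(t)(\sqrt{f})\simeq k(t)(\sqrt{g})$ iff $f/g$ is a square. I would state it directly in that form. The remaining steps are to identify the ramification of $\pi_P$ with the tangency of the lines through $P$, and to check the duality correspondence carefully.
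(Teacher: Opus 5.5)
Your proof is correct, but it takes a different route from the paper. The paper does no new geometry. It proves a purely combinatorial Claim, $\mathcal{L}(\hat{C}_1,\ldots,\hat{C}_n)=\bigcap_{i<j}\mathcal{L}(\hat{C}_i,\hat{C}_j)$, using only that a line meets the conic $\hat{C}_1$ in at most two points. It then combines this with two results from \cite{IT24}:
\begin{enumerate}
\item the reduction $S\Delta_{\mathrm{out}}(C_1,\ldots,C_n)=\bigcap_{i<j}S\Delta_{\mathrm{out}}(C_i,C_j)$, from \cite[Lemma~2.8]{IT24};
\item the two-component bijection $\mathcal{L}(\hat{C}_i,\hat{C}_j)\to S\Delta_{\mathrm{out}}(C_i,C_j)$, from the proof of \cite[Theorem~3.1]{IT24}.
\end{enumerate}
Bijectivity of $\Phi$ then follows from injectivity plus equality of the finite cardinalities.

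You instead prove the characterization directly for arbitrary $n$. Outer SG points are exactly the points $P$ from which all the $C_i$ have the same pair of tangent lines. You get this from Kummer theory for the degree-$2$ covers $\pi_P|_{C_i}$ (equal branch loci if and only if isomorphic over $\pi_P^*k(\mathbb{P}^1)$) together with biduality.

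What your approach buys:
\begin{enumerate}
\item It is self-contained; in effect it reproves the $n=2$ case of \cite{IT24} along the way.
\item It makes the geometric meaning of the condition explicit.
\item It checks directly that $\Phi(L)\notin C_i$ for every $i$, via transversality of $L$ with each $\hat{C}_i$.
\item The paper's Claim is absorbed into your observation that $P^\vee\cap\hat{C}_i$ always consists of exactly two points when $P\notin C_i$.
\end{enumerate}
Your surjectivity and the $\binom{m}{2}$ count come straight out of the characterization, rather than from a cardinality comparison. The paper's argument is shorter only because it relies on these earlier results.
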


\begin{remark}
	Let the notation be the same as in Theorem~\ref{Theorem SG points quadrics}. Then $0 \leq m \leq 4$ because the dual curve $\hat{C}_i$ is again of degree $2$.
\end{remark}

We note that (2) in \cite[Lemma 2.12]{IT24} contains an error.  We require the additional condition that $P \notin C_1 \cup C_2$, and revise the lemma as follows:

\begin{lemma}[the revised version of  {\cite[Lemma 2.12~(2)]{IT24}}] \label{Lemma birat map extend to Proj trans}
	Let $C_1, C_2 \subset \mathbb{P}^2$ be nonsingular projective curves of the same degree $d$. Let $\phi:C_1 \rightarrow C_2$ be an isomorphism.
	If	
	\begin{enumerate} 
			\item $d \geq 4$, or
			\item $d=3$ and there exists $P \in \mathbb{P}^2 \setminus (C_1\cup C_2)$ such that $\pi_P \circ \phi = \pi_P$ as a map $C_1 \rightarrow \mathbb{P}^1$,
	\end{enumerate}
	then $\phi$ is the restriction of some projective transformation of $\mathbb{P}^2$.
\end{lemma}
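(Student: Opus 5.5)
Case (1), $d\ge4$, should be the classical fact that for a nonsingular plane curve of degree $d\ge 4$ the embedding is given by a unique $g^2_d$. Concretely, $\phi^*\mathcal{O}_{C_2}(1)$ is a $g^2_d$ on $C_1$, and by Noether's theorem (uniqueness of the $g^2_d$ on smooth plane curves of degree $\ge4$) it equals $\mathcal{O}_{C_1}(1)$ as a complete linear system. We will also use that the restriction maps $H^0(\mathbb{P}^2,\mathcal{O}(1))\to H^0(C_i,\mathcal{O}_{C_i}(1))$ are isomorphisms for smooth plane curves of degree $\ge 2$, by projective normality. Then $\phi^*$ induces a linear isomorphism $H^0(C_2,\mathcal{O}(1))\cong H^0(C_1,\mathcal{O}(1))$, and hence an element $A\in PGL_3$ with $A|_{C_1}=\phi$. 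This part is unchanged from \cite{IT24}, so we just recall it.

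Case (2), $d=3$, is where the hypothesis on $P$ enters. Here $C_1,C_2$ are elliptic curves and $\mathcal{O}_{C_1}(1)$ is a degree-$3$ line bundle, which is no longer unique. The plan is to show $\phi^*\mathcal{O}_{C_2}(1)\cong\mathcal{O}_{C_1}(1)$ using $\pi_P\circ\phi=\pi_P$, and then conclude exactly as in case (1).

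Choose coordinates with $P=(0:0:1)$, so that $\pi_P=(X:Y)$. Since $P\notin C_1\cup C_2$, the linear forms $X,Y$ have no common zero on $C_i$, and $\pi_P|_{C_i}$ is the morphism given by the base-point-free pencil $\langle X|_{C_i},Y|_{C_i}\rangle\subset H^0(\mathcal{O}_{C_i}(1))$. Hence $\pi_P^*\mathcal{O}_{\mathbb{P}^1}(1)=\mathcal{O}_{C_i}(1)$ for $i=1,2$. The identity $\pi_P\circ\phi=\pi_P$ then gives
\[
\phi^*\mathcal{O}_{C_2}(1)=\phi^*\pi_P^*\mathcal{O}_{\mathbb{P}^1}(1)=\pi_P^*\mathcal{O}_{\mathbb{P}^1}(1)=\mathcal{O}_{C_1}(1).
\]
Equivalently, for each line $\ell\ni P$ the divisor $\phi(\ell\cdot C_1)$ equals $\ell\cdot C_2$. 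In particular, $\phi^*$ carries $X|_{C_2},Y|_{C_2}$ to $X|_{C_1},Y|_{C_1}$, up to a common scalar, under this identification.

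With the line bundles matched, $\phi^*:H^0(C_2,\mathcal{O}(1))\to H^0(C_1,\mathcal{O}(1))$ is a linear isomorphism of $3$-dimensional spaces. Via the restriction isomorphisms from $H^0(\mathbb{P}^2,\mathcal{O}(1))$, this gives a matrix $A$ whose induced map $\mathbb{P}^2\to\mathbb{P}^2$ restricts to $\phi$ on $C_1$. (Moreover $A$ fixes the pencil through $P$.)

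The main obstacle, and the precise point where the original statement failed, is the identification $\pi_P^*\mathcal{O}(1)=\mathcal{O}_{C_i}(1)$. If $P\in C_i$, then $\pi_P|_{C_i}$ has degree $2$ and is given by $\mathcal{O}_{C_i}(1)(-P)$. One then only gets $\phi^*\mathcal{O}_{C_2}(1)(-P)\cong\mathcal{O}_{C_1}(1)(-P)$, and only when $P$ lies on both curves, which carries an extra twist by $P$ versus $\phi^{-1}(P)$ that need not cancel. So I would stress in the write-up that base-point-freeness of the pencil $\langle X,Y\rangle$ on both curves is exactly what the assumption $P\notin C_1\cup C_2$ buys, and I would give a brief counterexample remark for the inner case.
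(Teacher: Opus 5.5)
Your proof is correct and complete. The paper does not write out a proof of this revised lemma. It states the corrected version and gives the counterexample $E\colon Y^2Z=X^3-XZ^2$, $P=(0:0:1)$, with $\phi(P)=(0:1:0)$, for the inner case. So there is no competing argument to compare with. Case (1) via uniqueness of the $g^2_d$ is the standard argument. In case (2), the identification $(\pi_P|_{C_i})^*\mathcal{O}_{\mathbb{P}^1}(1)\cong\mathcal{O}_{C_i}(1)$ is exactly what the new hypothesis $P\notin C_1\cup C_2$ provides, through base-point-freeness of $\langle X,Y\rangle$. That identification is the natural proof.

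Your closing remark needs a small correction. When $P\in C_1\cap C_2$, pulling back gives $\phi^*\mathcal{O}_{C_2}(1)\otimes\mathcal{O}_{C_1}(-\phi^{-1}(P))\cong\mathcal{O}_{C_1}(1)(-P)$. That is, $\phi^*\mathcal{O}_{C_2}(1)\cong\mathcal{O}_{C_1}(1)(\phi^{-1}(P)-P)$, not $\phi^*\mathcal{O}_{C_2}(1)(-P)\cong\mathcal{O}_{C_1}(1)(-P)$ as you wrote. Since $C_1$ has genus $1$, this is $\mathcal{O}_{C_1}(1)$ exactly when $\phi(P)=P$, which is precisely what fails in the paper's counterexample. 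The case where $P$ lies on exactly one $C_i$ is vacuous, because $\pi_P$ would have degree $2$ on one curve and $3$ on the other. Your method therefore also shows that, in the inner case, $\phi$ is the restriction of a projective transformation as soon as $\phi(P)=P$.
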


In fact, we have the following counterexample to \cite[Lemma 2.12]{IT24}. Let $E: Y^2Z = X^3 - XZ^2$ be an elliptic curve, and $P=(0:0:1)$. Consider the projection $\pi_P : E \to \mathbb{P}^1$, $(X:Y:Z) \mapsto (X:Y)$, and the morphism $\phi : E \to E$, $(X:Y:Z) \mapsto (-XZ : -YZ : X^2)$. It is easy to check that $\pi_P \circ \phi = \pi_P$. Because $\phi(P) = (0:1:0) \neq P$, we see that $\phi$ cannot be extended to a projective transformation. More generally, for an elliptic curve $E$ given in Weierstrass form and a point $P \in E$ that is not a $3$-torsion point, the morphism $\sigma_P := [-1] \circ \tau_P$ satisfies (i) $\pi_P \circ \sigma_P = \pi_P$ and (ii) $\sigma_P \notin \Aut(\mathbb{P}^2)$, where $[-1]$ is the multiplication-by-$(-1)$ map and $\tau_P$ is the translation-by-$P$ map on $E$. 
	
We conclude this section by summarizing the remaining case. For a reduced plane curve $C=\bigcup_{i=1}^n C_i$ whose irreducible components are nonsingular of degree $d$, we have been studying the problem of determining the number of SG points for $C$. We solve this problem by Theorem~\ref{Theorem IT24} and Theorem~\ref{Theorem SG points quadrics}, except for the inner case of degree $d=3$ (see also Table~\ref{table:SGpoints}). In Section~\ref{section: nonsingular cubic}, we consider the  remaining case $d=3$.

\begin{table}[!ht]
	\caption{The number of SG points for $C=\bigcup_{i=1}^n C_i$}
	\label{table:SGpoints}
	\centering
	\begin{tabular}{|c|c|c|c|c|c|} \hline
   		degree $d$ & $1$ & $2$ & $3$ & $4$ & $d \geq 5$ \\ \hline
   		$\# S\Delta_{\mathrm{in}}(C_1, \ldots , C_n)$ & $0$ & $\# \bigcap_{i=1}^n C_i$ & ? & $\leq 2$ & $\leq 1$\\
   		$\# S\Delta_{\mathrm{out}}(C_1, \ldots , C_n)$ & $\infty$ & $0,1,3,6$  & $\leq 1$ & $\leq 1$ & $\leq 1$ \\ \hline
 	\end{tabular}
\end{table}

\section{Proof of Theorem~\ref{Theorem SG points quadrics}}

Throughout this section, we work under the assumptions of Theorem~\ref{Theorem SG points quadrics}. We first prove the following claim.

\begin{claim} \label{Claim representation mathcal L}
	\[
		\mathcal{L}(\hat{C}_1, \ldots , \hat{C}_n)
		= \bigcap_{1 \le i < j \le n} \mathcal{L}(\hat{C}_i, \hat{C}_j).
	\]
\end{claim}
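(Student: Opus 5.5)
The inclusion ``$\subseteq$'' is immediate: if $\ell=\overline{pq}$ with $p\neq q$ in $\bigcap_{i=1}^n \hat{C}_i$, then for every pair $i<j$ we have $p,q\in \hat{C}_i\cap\hat{C}_j$, so $\ell\in\mathcal{L}(\hat{C}_i,\hat{C}_j)$. The whole content of the claim is therefore the reverse inclusion. The plan is to exploit that each $\hat{C}_i$ is a conic, so a line meets it in at most two points.

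First I would record the needed fact. Since $C_i$ is a nonsingular conic and $\operatorname{char} k=0$, its dual $\hat{C}_i$ is again a nonsingular, in particular irreducible, conic in $\hat{\mathbb{P}}^2$ (this is also what the Remark after Theorem~\ref{Theorem SG points quadrics} uses). Hence $\hat{C}_i$ contains no line. By B\'ezout, every line $\ell\subset\hat{\mathbb{P}}^2$ satisfies $\#(\ell\cap\hat{C}_i)\le 2$.

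Now let $\ell\in\bigcap_{1\le i<j\le n}\mathcal{L}(\hat{C}_i,\hat{C}_j)$. Since $n\ge 2$, in particular $\ell\in\mathcal{L}(\hat{C}_1,\hat{C}_2)$. So there are distinct points $p,q\in\hat{C}_1\cap\hat{C}_2$ with $\ell=\overline{pq}$. By the bound above, $\ell\cap\hat{C}_1=\{p,q\}$ exactly.

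Next fix any $j\ge 2$. Because $\ell\in\mathcal{L}(\hat{C}_1,\hat{C}_j)$, there are distinct points $p',q'\in\hat{C}_1\cap\hat{C}_j$ with $\ell=\overline{p'q'}$. Then $p',q'\in\ell\cap\hat{C}_1=\{p,q\}$, and being distinct they satisfy $\{p',q'\}=\{p,q\}$. Hence $p,q\in\hat{C}_j$ for every $j$, so $p,q\in\bigcap_{i=1}^n\hat{C}_i$ and $\ell=\overline{pq}\in\mathcal{L}(\hat{C}_1,\dots,\hat{C}_n)$.

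There is no real obstacle here. The only point needing care is justifying that a line meets $\hat{C}_1$ in at most two points, which rests on $\hat{C}_1$ being an irreducible conic; this is where the nonsingularity of $C_i$ and characteristic $0$ enter.
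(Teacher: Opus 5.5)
Your proof is correct and follows the paper's argument essentially verbatim. Both prove the trivial inclusion, then use $\ell\in\mathcal{L}(\hat{C}_1,\hat{C}_2)$ together with $\deg\hat{C}_1=2$ to get $\ell\cap\hat{C}_1=\{p,q\}$, and then use $\ell\in\mathcal{L}(\hat{C}_1,\hat{C}_j)$ to force $p,q\in\hat{C}_j$ for every $j$; your extra justifications (irreducibility of $\hat{C}_1$ giving the B\'ezout bound, and $\{p',q'\}=\{p,q\}$) only make explicit what the paper leaves implicit.
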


\begin{proof}
	Let $l \in \mathcal{L}(\hat{C}_1, \ldots , \hat{C}_n)$. 
	Then there exist distinct points $p, q \in \bigcap_{i=1}^n \hat{C}_i$ such that $l = \overline{pq}$. 
	Hence $l \in \mathcal{L}(\hat{C}_i, \hat{C}_j)$ for all $i<j$, and thus
	\[
		\mathcal{L}(\hat{C}_1, \ldots , \hat{C}_n)
		\subset \bigcap_{1 \le i < j \le n} \mathcal{L}(\hat{C}_i, \hat{C}_j).
	\]

	Conversely, let $l \in \bigcap_{i<j} \mathcal{L}(\hat{C}_i, \hat{C}_j)$. 
	Because $l \in \mathcal{L}(\hat{C}_1, \hat{C}_2)$ and $\deg \hat{C}_1 = 2$, we can write $l \cap \hat{C}_1 = \{p,q\}$ with $p \ne q$. 
	For each $j \ge 2$, the condition $l \in \mathcal{L}(\hat{C}_1, \hat{C}_j)$ implies $\{p,q\} \subset \hat{C}_j$. 
	Thus $\{p,q\} \subset \bigcap_{i=1}^n \hat{C}_i$, and hence $l \in \mathcal{L}(\hat{C}_1, \ldots , \hat{C}_n)$.
\end{proof}

We next prove Theorem~\ref{Theorem SG points quadrics}. By \cite[Lemma~2.8, the proof of Theorem~3.1]{IT24} and Claim~\ref{Claim representation mathcal L}, the map $\Phi$ is a well-defined injection and
\[
	\begin{split}
		\# S\Delta_{\mathrm{out}}(C_1, \dots, C_n)
			&= \# \bigcap_{1\le i < j\le n} S\Delta_{\mathrm{out}}(C_i, C_j) \\
			&= \# \bigcap_{1\le i < j\le n} \mathcal{L}(\hat{C}_i, \hat{C}_j) \\
			&= \# \mathcal{L}(\hat{C}_1, \ldots , \hat{C}_n).
	\end{split}
\] 
Therefore, $\Phi$ is a bijection.

\section{The case of degree $d=3$} \label{section: nonsingular cubic}
The only remaining case in the problem of determining the number of SG points, when the irreducible components are nonsingular, is the inner case of degree $d=3$. In this section, we present some partial results for this case. We use Lemma~\ref{Lemma: Geometric property of SG point} without further mention.

First, we define a certain class of SG points which are convenient from a geometric viewpoint.

\begin{definition}\label{Definition: S1}
	 Let $C_{1}, C_{2} \subset \mathbb{P}^{2}$ be nonsingular cubic curves such that $C:= C_{1}\cup C_{2}$ is reduced. A point $P \in \mathbb{P}^{2}$ is called an SG point of the {\it first kind} for $C$ if there exists a birational map $\phi : C_{1} \to C_{2}$ such that (i) $\pi_{P} \circ \phi = \pi_{P}$, and (ii) $\phi$ extends to a projective transformation. We denote by $S\Delta^1(C_1, C_2)$ the set of all SG points of the first kind for $C$, and by $S\Delta^1_{\mathrm{in}}(C_1, C_2) := S\Delta_{\mathrm{in}}(C_1, C_2) \cap S\Delta^1(C_1, C_2)$ the set of all inner SG points of the first kind for $C$.
\end{definition}

\begin{lemma}\label{lemma: local intersection number}
	Let $C_{1}$ and $C_{2}$ be nonsingular cubic curves, and $P \in C_1 \cap C_2$. Let $I_P(C_1, C_2)$ be the local intersection number of $C_1$ and $C_2$ at $P$. Assume that $P \in S\Delta^1_{\mathrm{in}}(C_{1}, C_{2})$. Then $I_P(C_1, C_2) \geq 2$, that is $T_P(C_1) = T_P(C_2)$, where $T_P(C_i)$ \textup{(}$i=1, 2$\textup{)} is the tangent line to $C_i$ at $P$.
\end{lemma}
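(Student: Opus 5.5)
Let $P \in S\Delta^1_{\mathrm{in}}(C_1,C_2)$. By Definition~\ref{Definition: S1} there is a birational map $\phi: C_1 \dashrightarrow C_2$ with $\pi_P\circ\phi=\pi_P$ that is the restriction of a projective transformation $T$ of $\mathbb{P}^2$. Since $C_1,C_2$ are nonsingular, $\phi$ is an isomorphism $C_1\to C_2$. The plan is to show that $T$ fixes $P$ and preserves every line through $P$. From this we will get $T(T_P(C_1)) = T_P(C_2)$ and also $T(T_P(C_1)) = T_P(C_1)$, which gives the claim.

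\textbf{Step 1: $T$ preserves every line through $P$.} For a general line $l \ni P$, the fibre $\pi_P^{-1}(\pi_P(l))$ on $C_1$ is $(l\cap C_1)\setminus\{P\}$, consisting of two points. The condition $\pi_P\circ\phi=\pi_P$ says that $\phi$ sends these two points into $l$. So $T$ maps two distinct points of $l$, both different from $P$, into $l$. Since $T$ is linear, $T(l)=l$ for a general line $l$ through $P$, and therefore for all of them by continuity.

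\textbf{Step 2: $T$ fixes $P$.} The point $P$ is the common point of any two distinct lines through $P$, and each such line is $T$-invariant by Step~1. Hence $T(P)=P$.

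\textbf{Step 3: compare tangent lines.} Because $T$ restricts to an isomorphism $C_1\to C_2$ and fixes $P \in C_1$, it maps $T_P(C_1)$ onto the tangent line of $C_2$ at $T(P)=P$. This gives $T(T_P(C_1))=T_P(C_2)$. On the other hand, $T_P(C_1)$ is a line through $P$, so Step~1 gives $T(T_P(C_1))=T_P(C_1)$. Combining the two, $T_P(C_1)=T_P(C_2)$. Both curves are smooth at $P$ with a common tangent there, so $I_P(C_1,C_2)\ge 2$.

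\textbf{Expected obstacle.} The delicate point is Step~1. It requires the fibres of $\pi_P$ on $C_1$ to be generically two points distinct from $P$, and $\phi$ to respect them. The relation $\pi_P\circ\phi=\pi_P$ holds on a dense open set, and that suffices to handle a general line $l$. Containment in $l$ then extends to all lines through $P$ by closedness: the set of lines through $P$ fixed by $T$ is Zariski closed in $\mathbb{P}^1$ and contains a dense subset. Note that we never need $\phi(P)=P$ directly from the projection condition, as the elliptic counterexample in the introduction shows such a conclusion can fail for maps that do not come from a projective transformation. Here it follows from linearity of $T$.
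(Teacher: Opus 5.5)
Your proof is correct and follows the same route as the paper: the projective transformation fixes $P$ and sends $T_P(C_1)$ both to itself and to $T_P(C_2)$. Your Steps 1--2 (a general line through $P$, hence every such line, is $T$-invariant, so $T(P)=P$) supply the justification for $\phi(P)=P$ and $\phi(T_P(C_1))=T_P(C_1)$, which the paper's one-line proof leaves implicit.
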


\begin{proof}
	By the assumption, there exists a projective transformation $\phi: C_1 \to C_2$ such that $\pi_{P} \circ \phi = \pi_{P}$. Then we have $\phi(P)=P$, and $T_P(C_1) = \phi(T_P(C_1)) = T_{\phi(P)}(\phi(C_1)) = T_P(C_2)$.
\end{proof}

By Lemma~\ref{lemma: local intersection number} and B\'{e}zout's theorem, we have the following:

\begin{corollary}
	Let $C_{1}$ and $C_{2}$ be nonsingular cubic curves. Then \[\# S\Delta^1_{\mathrm{in}}(C_{1}, C_{2}) \leq 4. \]
\end{corollary}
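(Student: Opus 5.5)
The bound should follow from Lemma~\ref{lemma: local intersection number} combined with B\'{e}zout's theorem. First we check that $C_1 \ne C_2$. This holds implicitly: the union $C_1\cup C_2$ is assumed reduced in Definition~\ref{Definition: S1}, so it has two distinct irreducible components. Both curves are irreducible cubics, so they share no common component, and B\'{e}zout's theorem gives
\[
	\sum_{P \in C_1 \cap C_2} I_P(C_1, C_2) = 9 .
\]
In particular $C_1\cap C_2$ is a finite set.

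Next, every point $P \in S\Delta^1_{\mathrm{in}}(C_1, C_2)$ is an inner SG point, so it lies in $C_1 \cap C_2$. By Lemma~\ref{lemma: local intersection number}, it satisfies $I_P(C_1, C_2) \geq 2$. All local intersection numbers are nonnegative, so discarding the other points of $C_1 \cap C_2$ only lowers the sum. Writing $s = \# S\Delta^1_{\mathrm{in}}(C_1, C_2)$, we get
\[
	2s \;\le\; \sum_{P \in S\Delta^1_{\mathrm{in}}(C_1, C_2)} I_P(C_1, C_2) \;\le\; 9 .
\]
Hence $s \le 9/2$, and since $s$ is an integer, $s \le 4$.

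None of these steps presents a real difficulty. The only point needing care is confirming that the hypotheses of B\'{e}zout's theorem hold, namely that $C_1$ and $C_2$ have no common component; this comes from the reducedness of $C_1 \cup C_2$ together with the irreducibility of nonsingular plane cubics. Everything else is the counting above.
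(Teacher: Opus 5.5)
Your proof is correct and is the same argument the paper gives. Lemma~\ref{lemma: local intersection number} gives $I_P(C_1,C_2)\ge 2$ at each inner SG point of the first kind, and B\'ezout's total of $9$ then forces $2s\le 9$, i.e.\ $s\le 4$. Your explicit check that $C_1\neq C_2$ fills in a hypothesis the paper leaves implicit in its setup of distinct irreducible components, and it is needed, since for $C_1=C_2$ the bound would fail.
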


We provide a necessary condition for $S\Delta^1_{\mathrm{in}}(C_1, C_2)$.

\begin{proposition}\label{Proposition: S1}
	Let $C_{1}$ and $C_{2}$ be nonsingular cubic curves, and $P \in C_{1} \cap C_{2}$. Let $\pi_{P}$ be the projection from $P$. Then the following condition \textup{(}a\textup{)} implies \textup{(}b\textup{)}\textup{:}
	\begin{enumerate}
		\item $P \in S\Delta^1_{\mathrm{in}}(C_1, C_2)$\textup{;}
		\item $\pi _{P}(R_{1}\setminus \{ P \}) = \pi _{P}(R_{2}\setminus \{ P \})$, where $R_{i}$ \textup{(}$i=1,2$\textup{)} is the set of ramification points of $\pi _{P}|_{C_i}$.
	\end{enumerate}
\end{proposition}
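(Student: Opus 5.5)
We will deduce (b) from (a) by exploiting that the projective transformation conjugates the two projections. Assume (a). By Definition~\ref{Definition: S1} there is a projective transformation $\Phi$ of $\mathbb{P}^2$ with $\Phi(C_1)=C_2$ and $\phi=\Phi|_{C_1}$ satisfying $\pi_P\circ\phi=\pi_P$ on $C_1$. As in the proof of Lemma~\ref{lemma: local intersection number}, we first check that $\phi(P)=P$.

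Next we show that $\Phi$ fixes $P$ as a point of $\mathbb{P}^2$ and maps every line through $P$ to itself. A general line $\ell$ through $P$ meets $C_1$ in two further points $Q, Q'$. The relation $\pi_P\circ\phi=\pi_P$ says $\phi(Q),\phi(Q')\in\ell$, so $\Phi(\ell)$ is the line through $\phi(Q)$ and $\phi(Q')$, which is $\ell$, provided these two points are distinct. They are distinct for general $\ell$ because $\phi$ is injective.

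With this in hand, consider a point $Q\in C_1\setminus\{P\}$ and the line $\ell=\overline{PQ}$. Because $\Phi$ is an isomorphism of $\mathbb{P}^2$ preserving $\ell$, it carries the divisor $\ell\cdot C_1$ onto $\ell\cdot C_2$ with multiplicities preserved. Since $\Phi$ fixes $P$, it also carries the residual divisor $\ell\cdot C_1-P$ onto $\ell\cdot C_2-P$. The map $\pi_P|_{C_i}$ has degree $2$, with fibre over $\ell$ equal to this residual divisor, so it is ramified at a point $Q\ne P$ exactly when the residual divisor is $2Q$. Hence $Q\in R_1\setminus\{P\}$ if and only if $\phi(Q)\in R_2\setminus\{P\}$, and in that case $\pi_P(\phi(Q))=\pi_P(Q)$. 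This gives the inclusion $\pi_P(R_1\setminus\{P\})\subset\pi_P(R_2\setminus\{P\})$. The reverse inclusion follows by applying the same argument to $\phi^{-1}=\Phi^{-1}|_{C_2}$, which also satisfies $\pi_P\circ\phi^{-1}=\pi_P$.

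The only delicate step is the bookkeeping at the special line $\ell=T_P(C_1)=T_P(C_2)$, which we know coincide by Lemma~\ref{lemma: local intersection number}. On this line the residual divisor contains $P$ itself, and $P$ is a ramification point only when it is a flex. By excluding $P$ from $R_i$ in the statement and working with residual divisors, the divisor comparison above still holds, since it is an equality of divisors on $\ell$ preserved by $\Phi$ and compatible with removing the fixed point $P$.
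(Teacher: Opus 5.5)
Your argument is correct and is essentially the paper's: $\Phi$ fixes $P$ and preserves lines through $P$ together with intersection multiplicities, so $\phi$ restricts to a bijection $R_1\setminus\{P\}\to R_2\setminus\{P\}$, and $\pi_P\circ\phi=\pi_P$ then gives (b); your residual-divisor criterion is just the paper's condition $\overline{P\Theta}=T_\Theta(C_i)$ in another form. One small slip: you check $\Phi(\ell)=\ell$ only for general $\ell$ but then use it for the special lines $\overline{PQ}$ with $Q$ a ramification point. This is immediate, since $\Phi(\overline{PQ})=\overline{P\,\phi(Q)}$, and $\phi(Q)\in\overline{PQ}\setminus\{P\}$ follows from $\pi_P\circ\phi=\pi_P$ and the injectivity of $\phi$ with $\phi(P)=P$.
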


\begin{proof}
	Assume that (a). Thus there exists an isomorphism $\phi :C_{1} \to C_{2}$ which can be extended to a projective transformation and satisfies $\pi_{P} \circ \phi = \pi _{P}$. For $\Theta _{i} \in C_{i}\setminus\{ P\}$ ($i=1,2$), the line passing through $P$ and $\Theta_{i}$ is the tangent line $T_{\Theta_{i}}(C_{i})$ if and only if $\Theta _{i}$ is a ramification point of $\pi _{P}|_{C_{i}}$. Then $\phi$ induces a bijection $\phi : R_{1}\setminus \{ P \} \to R_{2}\setminus \{ P \}$. Because $\pi_{P} \circ \phi = \pi _{P}$, the assertion (b) follows.
\end{proof}

\begin{remark}\label{remark: ramification-inflection}
	For a nonsingular cubic curve $C$ and $P \in C$, the point $P$ is a ramification point of $\pi _{P}|_{C}$ if and only if $P$ is an inflection point of $C$.
\end{remark}

\begin{corollary}
	Let $C_{1}$ and $C_{2}$ be nonsingular cubic curves, and $P \in C_{1} \cap C_{2}$. If $P$ is an inflection point of $C_{1}$ but not an inflection point of $C_{2}$, then $P \notin S\Delta^1_{\mathrm{in}}(C_1, C_2)$.
\end{corollary}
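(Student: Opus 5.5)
The plan is to argue by contradiction, combining the projective transformation $\phi$ in Definition~\ref{Definition: S1} with Remark~\ref{remark: ramification-inflection}. Suppose $P \in S\Delta^1_{\mathrm{in}}(C_1, C_2)$. Then there is an isomorphism $\phi: C_1 \to C_2$ that is the restriction of a projective transformation $\tilde\phi$ of $\mathbb{P}^2$ and satisfies $\pi_P \circ \phi = \pi_P$.

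The first step is to show $\phi(P) = P$, exactly as in the proof of Lemma~\ref{lemma: local intersection number}. Since $\pi_P\circ\phi=\pi_P$ on $C_1\setminus\{P\}$, every line $L$ through $P$ satisfies $\tilde\phi(L \cap C_1) \subset L$. Taking two distinct such lines that each meet $C_1$ in a point other than $P$, the image lines $\tilde\phi(L)$ are forced to equal $L$. Hence $\tilde\phi$ fixes every line through $P$ in this generic sense, and so fixes $P$ as their intersection.

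Next, a projective transformation sends inflection points to inflection points: it preserves local intersection multiplicities of a curve with its tangent line, and $\tilde\phi(T_P(C_1)) = T_P(C_2)$. Since $P$ is an inflection point of $C_1$, we get $I_P(C_2, T_P(C_2)) = I_P(C_1, T_P(C_1)) \geq 3$. Thus $P = \phi(P)$ is an inflection point of $C_2$, contradicting the hypothesis.

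The same contradiction can be phrased through Remark~\ref{remark: ramification-inflection}. Since $\pi_P\circ\phi = \pi_P$ and $\phi$ is an isomorphism fixing $P$, the ramification index of $\pi_P|_{C_1}$ at $P$ equals that of $\pi_P|_{C_2}$ at $\phi(P)=P$. So $P$ is a ramification point for $C_1$ exactly when it is one for $C_2$, which again contradicts the hypothesis. The only delicate point is the justification that $\phi(P) = P$, and that is already supplied by Lemma~\ref{lemma: local intersection number}'s argument; the rest is routine.
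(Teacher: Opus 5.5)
Your argument is correct, but it takes a different route from the paper.

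\textbf{The paper's route.} The paper deduces the corollary from Proposition~\ref{Proposition: S1}. A first-kind $\phi$ would force $\pi_P(R_1\setminus\{P\})=\pi_P(R_2\setminus\{P\})$. Since $\pi_P|_{C_i}$ has degree $2$, it is injective on $R_i$. It has exactly four ramification points, so $R_i\setminus\{P\}$ has $3$ or $4$ elements according as $P$ is or is not an inflection point of $C_i$ (Remark~\ref{remark: ramification-inflection}). The two branch sets therefore have different cardinalities.

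\textbf{Your route.} You look only at the point $P$ itself. Once $\phi(P)=P$, the projective transformation carries $T_P(C_1)$ to $T_P(C_2)$ and preserves intersection multiplicity, so inflection at $P$ transfers directly. Equivalently, $e_P(\pi_P|_{C_1})=e_{\phi(P)}(\pi_P|_{C_2})$ because $\phi$ is an isomorphism over $\mathbb{P}^1$.

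\textbf{Comparison.} Your route is shorter and avoids the ramification count. It also makes clear that the only input from ``first kind'' is $\phi(P)=P$. Your ramification-index version works for any isomorphism $\phi$ with $\pi_P\circ\phi=\pi_P$ and $\phi(P)=P$, projective or not. The paper's proof of Proposition~\ref{Proposition: S1} also silently uses $\phi(P)=P$, so both arguments rest on the same fact. What the paper's route buys is that it presents the corollary as a special case of the computable necessary condition (b). That is the tool it reuses in Corollary~\ref{Corollary (s, t)}.

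\textbf{A small repair.} Your justification of $\phi(P)=P$ needs one fix. A single point $Q\neq P$ of $L\cap C_1$ with $\tilde\phi(Q)\in L$ does not force $\tilde\phi(L)=L$. You also may not use $\tilde\phi(P)\in L$, since that is what you are proving; a priori the condition at $P$ only gives $\tilde\phi(P)\in T_P(C_1)$. Instead take a line $L$ through $P$ with $L\neq T_P(C_1)$ whose point in $\mathbb{P}^1$ is not a branch value of $\pi_P|_{C_1}$. Then $L$ meets $C_1$ in two further distinct points $Q\neq Q'$. Their distinct images both lie on $L$, so $\tilde\phi(L)=L$. Intersecting two such lines gives $\tilde\phi(P)=P$.
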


\begin{proof}
	Let $\pi_i = \pi_{P}|_{C_{i}}$ and $R_{i}$ be the set of ramification points of $\pi_{i}$ ($i=1, 2$).
	For $\Theta \in C_i$, $\Theta $ is a ramification point of $\pi_i$ if and only if $\# \pi_{i}^{-1}(\pi_{i}(\Theta))=1$, because of $\deg(\pi_i) = 2$. Hence the map $\pi_{i}|_{R_{i}}: R_{i} \to \mathbb{P}^{1}$ is an injection, and we have $\# \pi_{i} (R_{i}\setminus \{ P \}) = \#(R_{i} \setminus \{ P \})$. Because  
	\[
		\#(R_{i} \setminus \{ P \}) = 
		\begin{cases}
			3, & \text{if $P$ is an inflection point of $C_{i}$,}\\
			4, & \text{otherwise}
		\end{cases}
	\]
	by Remark~\ref{remark: ramification-inflection}, it follows from Proposition~\ref{Proposition: S1} that the assertion holds. 
\end{proof}

The condition (b) of Proposition~\ref{Proposition: S1} is easy to verify by using the following lemma.
\begin{lemma} \label{lemma: Theta}
	Let $C$ be a nonsingular cubic curve defined by a homogeneous polynomial $F$, and $P=(a:b:c) \in C$. For $\Theta \in C \setminus \{ P \}$, the point $\Theta$ is a ramification point of the projection $\pi _{P}|_{C}$ from $P$ if and only if
	\[
		a \, \frac{\partial F}{\partial X}(\Theta)+ b \, \frac{\partial F}{\partial Y}(\Theta)+ c \, \frac{\partial F}{\partial Z}(\Theta) =0.
	\]
\end{lemma}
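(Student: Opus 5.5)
The plan is to use the characterization already given in the proof of Proposition~\ref{Proposition: S1}. For $\Theta \in C\setminus\{P\}$, $\Theta$ is a ramification point of $\pi_P|_C$ exactly when the line $\overline{P\Theta}$ is tangent to $C$ at $\Theta$. This is justified by noting that $\pi_P|_C$ has degree $2$, so $\Theta$ ramifies iff $\overline{P\Theta}$ meets $C$ at $\Theta$ with multiplicity $\geq 2$. Since $\Theta$ is a nonsingular point of $C$, that multiplicity condition is equivalent to $\overline{P\Theta} = T_\Theta(C)$.

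The remaining task is to express this tangency condition through the partial derivatives. Because $C$ is nonsingular, the gradient $(F_X(\Theta), F_Y(\Theta), F_Z(\Theta))$ is nonzero. Hence
\[
T_\Theta(C) : F_X(\Theta)X + F_Y(\Theta)Y + F_Z(\Theta)Z = 0
\]
is a genuine line. By Euler's formula, $X F_X + Y F_Y + Z F_Z = 3F$, which vanishes at $\Theta$, so $\Theta \in T_\Theta(C)$. Since $P \neq \Theta$, the line $\overline{P\Theta}$ is well defined. It equals $T_\Theta(C)$ iff $P \in T_\Theta(C)$, because both lines pass through $\Theta$ and a line through $\Theta$ containing $P$ must be $\overline{P\Theta}$. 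Finally, $P \in T_\Theta(C)$ is exactly the displayed equation $aF_X(\Theta)+bF_Y(\Theta)+cF_Z(\Theta)=0$.

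The only step needing care is the first equivalence, namely ramification at $\Theta$ versus tangency. I would make it precise in an affine chart with $P$ at the origin of the line coordinate. The fiber of $\pi_P$ over $\pi_P(\Theta)$ is $(\overline{P\Theta}\cap C)\setminus\{P\}$, with multiplicities given by the intersection multiplicities minus the contribution at $P$. The total $I(\overline{P\Theta},C)=3$ splits as $1$ at $P$, since the line is not tangent at $P$ generically. If $\overline{P\Theta}=T_P(C)$, the contribution at $P$ is larger, but then Bézout forces $\Theta$ to be the unique other point, with multiplicity $1$. That case is consistent with both sides failing: $\Theta$ is not ramified, and $P\notin T_\Theta(C)$ unless the line is also tangent at $\Theta$, which would exceed degree $3$. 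Checking this boundary case is the main obstacle, though it is routine.
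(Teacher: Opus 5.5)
Your proposal is correct and follows essentially the same route as the paper: reduce ramification at $\Theta \neq P$ to the condition $T_\Theta(C)=\overline{\Theta P}$, then observe that $P\in T_\Theta(C)$ is exactly the displayed equation, using the gradient equation of the tangent line. Your additional details are sound and fill in what the paper leaves implicit: Euler's formula placing $\Theta$ on the tangent line, and the intersection-multiplicity justification including the case $\overline{P\Theta}=T_P(C)$. The case split is avoidable, because for a line $\ell \ni P$ the fiber divisor of $\pi_P|_C$ is $\ell\cdot C - P$, so the ramification index at any $\Theta\neq P$ is $I_\Theta(\ell,C)$ uniformly.
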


\begin{proof}
	For $\Theta \in C \setminus \{ P \}$, the point $\Theta$ is a ramification point of $\pi _{P}|_{C}$ if and only if $T_{\Theta}(C) = \overline{\Theta P}$, where $T_{\Theta}(C)$ is the tangent line to $C$ at $\Theta$, and $\overline{\Theta P}$ is the line passing through points $\Theta$ and $P$. Then the assertion follows from
	\[
		T_{\Theta}(C) : \frac{\partial F}{\partial X}(\Theta)X+ \frac{\partial F}{\partial Y}(\Theta)Y+  \, \frac{\partial F}{\partial Z}(\Theta)Z =0.
	\]
\end{proof}

By applying Proposition~\ref{Proposition: S1} to elliptic curves whose coefficients depend on two parameters, we obtain reduced plane curves with a single SG point of the first kind:

\begin{corollary}\label{Corollary (s, t)}
	Let $E_1$ and $E_2$ be elliptic curves defined by
	$E_{1}: Y^2Z = F(X, Z)$, $E_{2}: (Y-sX-tZ)^2Z = F(X, Z)$,
	where
	\[
		F(X, Z) = X^3 + \frac{s^2}{4}X^2Z + \frac{st}{2}XZ^2 + \frac{t^2}{4}Z^3
	\]
	with $(s,t) \in k^2 \setminus \{(0,0)\}$, $t \neq 0$, and $t \neq s^3/27$. Then $S\Delta^1_{\mathrm{in}}(E_{1}, E_{2}) = \{ O \}$.
\end{corollary}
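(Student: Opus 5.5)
The plan is to show two things: first, that $O=(0:1:0)$ lies in $S\Delta^1_{\mathrm{in}}(E_1,E_2)$; second, that every other point of $E_1\cap E_2$ fails either Lemma~\ref{lemma: local intersection number} or Proposition~\ref{Proposition: S1}.

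\emph{Membership of $O$.} Let $\phi$ be the projective transformation $(X:Y:Z)\mapsto(X:Y+sX+tZ:Z)$.
\begin{itemize}
\item[(i)] It maps $E_1$ isomorphically onto $E_2$, because substituting $Y\mapsto Y-sX-tZ$ turns the equation of $E_2$ into that of $E_1$.
\item[(ii)] It fixes $O$.
\item[(iii)] It preserves the first and third coordinates. Since $\pi_O$ is $(X:Y:Z)\mapsto(X:Z)$, we get $\pi_O\circ\phi=\pi_O$.
\end{itemize}
Moreover $\pi_O|_{E_i}$ has degree $2$, so it is Galois. By Lemma~\ref{Lemma: Geometric property of SG point}, $O$ is therefore an inner SG point of the first kind.

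\emph{The intersection $E_1\cap E_2$.} Subtracting the two equations shows that $E_1\cap E_2=E_1\cap\{Z(sX+tZ)(2Y-sX-tZ)=0\}$. We compute each piece as follows.
\begin{itemize}
\item The line $Z=0$ is the flex tangent at $O$, so it meets $E_1$ only at $O$, with multiplicity $3$.
\item On the line $2Y=sX+tZ$, the equation of $E_1$ reduces to $X^3=0$, because $F$ was chosen exactly for this. So this line meets $E_1$ only at $Q=(0:t/2:1)$, with multiplicity $3$, and $Q$ is an inflection point of $E_1$.
\item On the line $sX+tZ=0$ (when $s\neq 0$), one computes $F(-t/s,1)=-t^3/s^3$. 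This gives $O$ together with two points $A_\pm=(-t/s:\pm\sqrt{-t^3/s^3}:1)$, which are distinct since $t\neq 0$.
\end{itemize}
Hence $I_{A_\pm}(E_1,E_2)=1$, and the $A_\pm$ are excluded by Lemma~\ref{lemma: local intersection number}. When $s=0$ this third line is $Z=0$, so the whole intersection is $\{O,Q\}$ and there are no points $A_\pm$.

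\emph{Excluding $Q$: the main obstacle.} Here the tangent lines agree, so Lemma~\ref{lemma: local intersection number} gives no information. Note that $\phi^{-1}(Q)=(0:-t/2:1)$, which is a flex of $E_1$ by the symmetry $Y\mapsto -Y$. Consequently $Q$ is also a flex of $E_2$, and the Corollary on inflection points does not apply either. So I must check condition (b) of Proposition~\ref{Proposition: S1} directly.

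By Remark~\ref{remark: ramification-inflection}, each set $R_i\setminus\{Q\}$ consists of three points. By Lemma~\ref{lemma: Theta}, these points are cut out on $E_i$ by the polar conic of $Q$. Concretely, for $E_1$ this is $\tfrac{t}{2}\cdot 2YZ+\partial_Z(Y^2Z-F)=0$; I will write the analogous conic for $E_2$.

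To compare $\pi_Q(R_1\setminus\{Q\})$ with $\pi_Q(R_2\setminus\{Q\})$, I will use affine coordinates and parametrize lines through $Q$ by their slope $\lambda$, writing $Y-t/2=\lambda X$. For a line through $Q$ not equal to the tangent, the residual intersection is given by a quadratic in $X$ after dividing by $X$. Ramification occurs when its discriminant vanishes, which yields a cubic $D_i(\lambda)$ for each $i=1,2$.

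For $E_2$ the substitution $Y\mapsto Y-sX-t$ moves the base point to $(0:-t/2:1)$. Equivalently, $D_2$ is obtained from the data of $E_1$ at the flex $(0,-t/2)$ with slope shifted by $s$; the symmetry $Y\mapsto -Y$ then yields $D_2(\lambda)=\pm D_1(s-\lambda)$ up to a constant.

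Condition (b) says that $D_1$ and $D_2$ have the same roots. Comparing coefficients should force a relation between $s$ and $t$. I expect this relation to reduce, given $t\neq 0$, to $t=s^3/27$ (possibly together with $s=0$, which would need separate handling). The hypothesis $t\neq s^3/27$ then shows that (b) fails, so $Q\notin S\Delta^1_{\mathrm{in}}$. This discriminant comparison is the computation I expect to be the delicate step.
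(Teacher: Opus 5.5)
Your treatment of $O$, of $E_1\cap E_2$, and of the reduction at $Q=P_t$ is correct. Your intersection analysis is actually more careful than the paper's proof. The paper asserts $E_1\cap E_2=\{O,P_t\}$, which holds only for $s=0$. For $s\neq 0$ one has $E_1\cdot E_2=4O+3P_t+A_++A_-$, and discarding $A_\pm$ via $I_{A_\pm}=1$ and Lemma~\ref{lemma: local intersection number} is exactly what is needed. Your symmetry argument gives $\pi_Q(R_2\setminus\{Q\})=s-\pi_Q(R_1\setminus\{Q\})$ in slope coordinates. This agrees with the paper's explicit sets, whose slopes are $-\tfrac s2-\tfrac{2t}{\alpha}$ and $\tfrac{3s}{2}+\tfrac{2t}{\alpha}$ with $\alpha^3-st\alpha-2t^2=0$.

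The gap is the last step. You do not carry out the comparison, and the outcome you predict is wrong, so your closing sentence proves nothing. The hypothesis $t\neq s^3/27$ is only the nonsingularity condition, since the discriminant of $E_1$ is $t^3(s^3-27t)$. What actually defeats condition (b) of Proposition~\ref{Proposition: S1} is $t\neq 0$.

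To finish, write $E_1$ affinely as $\bigl(y-\tfrac{sx+t}{2}\bigr)\bigl(y+\tfrac{sx+t}{2}\bigr)=x^3$. The line $y=\tfrac t2+\lambda x$ has residual equation $x^2-(\lambda^2-\tfrac{s^2}{4})x-t(\lambda-\tfrac s2)=0$. Its discriminant is $(\lambda-\tfrac s2)D_1(\lambda)$ with
\[
D_1(\lambda)=(\lambda-\tfrac s2)(\lambda+\tfrac s2)^2+4t=\lambda^3+\tfrac s2\lambda^2-\tfrac{s^2}{4}\lambda-\tfrac{s^3}{8}+4t .
\]
The factor $\lambda-\tfrac s2$ is the flex tangent at $Q$. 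Also $D_1(\tfrac s2)=4t\neq 0$, and the vertical line through $Q$ meets each $E_i$ in three distinct points. So the roots of $D_1$ are exactly the three ramification slopes, and these are distinct. Hence (b) is equivalent to the polynomial identity
\[
D_1(\lambda)=-D_1(s-\lambda)=\lambda^3-\tfrac{7s}{2}\lambda^2+\tfrac{15s^2}{4}\lambda-\tfrac{9s^3}{8}-4t .
\]
Comparing $\lambda^2$-coefficients forces $s=0$, and then the constant terms force $4t=-4t$, i.e.\ $t=0$. So (b) fails whenever $t\neq 0$, and $Q\notin S\Delta^1_{\mathrm{in}}(E_1,E_2)$.

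With this computation added, your route is a complete proof. It is essentially the paper's argument carried out in slope coordinates rather than via the polar conic of Lemma~\ref{lemma: Theta}.
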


\begin{proof}
	Note that the discriminant of $E_{1}$ is $t^3(s^3-27t)$. By direct calculation, it is easy to see that $E_{1}\cap E_{2} = \{ O:=(0:1:0), P_{t}:=(0:t/2:1) \}$.
	Because the projective transformation
	\[
		\phi _{s, t} :=
		\begin{pmatrix}
			1 & 0 & 0 \\
			s & 1 & t \\
			0 & 0 & 1
		\end{pmatrix}
	\]
	satisfies $\pi _{O} \circ \phi _{s, t} = \pi _{O}$, we have $O \in S\Delta^1_{\mathrm{in}}(E_{1}, E_{2})$.
	We next consider the point $P_t$. Let $R_i$ ($i=1, 2$) be the set of ramification points of $\pi _{P_{t}}|_{E_i}$.
	Using Lemma~\ref{lemma: Theta}, we obtain that
	\[
		\begin{aligned}
			R_{1} &= \{ P_{t} \} \cup \{ (\alpha: -\frac{s}{2}\alpha -\frac{3t}{2} : 1) \mid h(\alpha) =0 \}, \\
			R_{2} &= \{ P_{t} \} \cup \{ (\alpha: \frac{3s}{2}\alpha +\frac{5t}{2} : 1) \mid h(\alpha) =0 \},
		\end{aligned}
	\]
	where $h(x) = x^3-stx-2t^2$. (Note that the discriminant of $h(x)$ is $4t^3(s^3-27t) \neq 0$, so $h(x)$ has three distinct roots.) It follows from $t \neq 0$ that $\pi _{P_{t}} (R_{1}\setminus \{ P_{t} \}) \neq \pi _{P_{t}}(R_{2}\setminus \{ P_{t} \})$. Hence, by Proposition~\ref{Proposition: S1}, we have $P_{t} \notin S\Delta^1_{\mathrm{in}}(E_{1}, E_{2})$.
\end{proof}

\begin{remark}
	The elliptic curves $E_1$ and $E_2$ in Corollary~\ref{Corollary (s, t)} have the same tangent line
	\[
		T_{P_{t}} (E_i):  \frac{st}{2}X - tY + \frac{t^2}{2}Z = 0
	\]
	at $P_{t}$ ($i=1, 2$). Thus we cannot apply Lemma~\ref{lemma: local intersection number} to deduce that $P_{t} \notin S\Delta^1_{\mathrm{in}}(E_{1}, E_{2})$.
\end{remark}

For elliptic curves defined in Weierstrass form, we have the following.

\begin{proposition}
	Let $E_{1}$ and $E_{2}$ be elliptic curves given in Weierstrass form.
	Let $O=(0:1:0)$ and $\pi_{O}$ be the projection from $O$. For $n \in \mathbb{N}$, denote by $E_{i}[n]$ \textup{(}$i=1,2$\textup{)} the $n$-torsion subgroup of $E_{i}$. Then each of the following conditions implies the next one\textup{:}
	\begin{enumerate}
		\item $O \in S\Delta^1_{\mathrm{in}}(E_1, E_2)$\textup{;}
		\item $\pi _{O}(E_{1}[n]) = \pi _{O}(E_{2}[n])$ for all $n \in \mathbb{N}$\textup{;}
		\item $\pi _{O}(E_{1}[2]) = \pi _{O}(E_{2}[2])$, that is the $2$-division polynomials of $E_{1}$ and $E_{2}$ coincide\textup{;}
		\item $O \in S\Delta_{\mathrm{in}}(E_1, E_2)$.
	\end{enumerate}
\end{proposition}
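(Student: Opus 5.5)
We prove the chain (a)$\Rightarrow$(b)$\Rightarrow$(c)$\Rightarrow$(d), one implication at a time. Throughout, write $E_i: Y^2Z = f_i(X,Z)$ in (possibly general) Weierstrass form, with $O=(0:1:0)$ the common point at infinity. Then $\pi_O(X:Y:Z)=(X:Z)$ is the $x$-coordinate map, and $\pi_O|_{E_i}$ is the quotient by $[-1]$. Since $O$ is an inflection point, the group law with origin $O$ has the property that three points are collinear if and only if their sum is zero.

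For (a)$\Rightarrow$(b), take a projective transformation $\phi$ with $\phi(E_1)=E_2$ and $\pi_O\circ\phi=\pi_O$. As in the proof of Lemma~\ref{lemma: local intersection number}, $\phi(O)=O$. A projective transformation maps lines to lines, so it preserves collinearity of triples. Since $O$ is an inflection point of both curves, the group law with origin $O$ is defined by collinearity, and therefore $\phi:E_1\to E_2$ is a group isomorphism. Hence $\phi(E_1[n])=E_2[n]$, and applying $\pi_O\circ\phi=\pi_O$ gives $\pi_O(E_1[n])=\pi_O(E_2[n])$ for every $n$.

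The implication (b)$\Rightarrow$(c) is the special case $n=2$. To justify the reformulation in (c), first complete the square; this is harmless, since a change $Y\mapsto Y+\alpha X+\beta Z$ commutes with $\pi_O$. Then $E_i[2]=\{O\}\cup\{(e:0:1)\mid f_i(e,1)=0\}$ and $\pi_O(E_i[2])=\{(1:0)\}\cup\{(e:1)\}$. The cubic $f_i(x,1)$ has distinct roots and leading coefficient normalized to $1$, so it is determined by its root set, namely the $2$-division polynomial. Thus (c) says exactly that $f_1(x,1)=f_2(x,1)$ after normalization.

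For (c)$\Rightarrow$(d), we check the two conditions of Lemma~\ref{Lemma: Geometric property of SG point}. The first is automatic: $\pi_O|_{E_i}$ has degree $2$, so each extension $k(E_i)/\pi_O^*k(\mathbb{P}^1)$ is quadratic and hence Galois in characteristic $0$. For the second, write the $E_i$ in general Weierstrass form $(Y+\tfrac{a_{1,i}}2X+\tfrac{a_{3,i}}2Z)^2Z=g_i(X,Z)$, where $g_i(x,1)$ is the $2$-division polynomial up to a constant factor. Condition (c) gives $g_1=g_2$. Then the projective transformation $Y\mapsto Y+\tfrac{a_{1,2}-a_{1,1}}{2}X+\tfrac{a_{3,2}-a_{3,1}}{2}Z$, fixing $X$ and $Z$, maps $E_1$ to $E_2$ and commutes with $\pi_O$, exactly like $\phi_{s,t}$ in Corollary~\ref{Corollary (s, t)}. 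So $O$ is an SG point, and in fact of the first kind.

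The main obstacle I expect is the precise meaning of ``Weierstrass form'', in particular whether the $2$-division polynomial includes the $a_1,a_3$ terms and whether the normalization is monic. The plan is to fix the normalization $4x^3+b_2x^2+2b_4x+b_6$ and check that it is invariant under the $Y$-shifts used above, so that the argument holds uniformly for general Weierstrass equations.
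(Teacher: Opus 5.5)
Your proof is correct. For (a)$\Rightarrow$(b) and (b)$\Rightarrow$(c) you follow the paper. The paper only says that $\phi$ fixes $O$ and is therefore a bijective isogeny; your collinearity argument is a hands-on proof of that fact.

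The genuine difference is in (c)$\Rightarrow$(d). The paper argues intrinsically: the ramification points of $\pi_O|_{E_i}$ are exactly $E_i[2]$, so (c) says the two double covers of $\mathbb{P}^1$ have the same branch locus. Hence the quadratic extensions $k(E_i)/\pi_O^*k(\mathbb{P}^1)$ have discriminants agreeing up to a constant, and so they are isomorphic.

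You instead complete the square to write $E_i:(Y+\ell_i(X,Z))^2Z=g(X,Z)$ with a common monic $g$. You then write down a shear in $Y$ carrying $E_1$ to $E_2$ over $\pi_O$. As a map on points this shear is $Y\mapsto Y+\ell_1-\ell_2$; your formula is the corresponding substitution in the equation, which is harmless.

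Your route depends on the Weierstrass normal form, but it gives more than the statement asks for:
\begin{itemize}
\item It proves (c)$\Rightarrow$(a), so (a), (b) and (c) are equivalent.
\item An isomorphism over $\mathbb{P}^1$ preserves branch loci, so (d)$\Rightarrow$(c) also holds.
\item Therefore all four conditions are equivalent for curves in Weierstrass form, which the paper's one-way chain does not record.
\end{itemize}

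The paper's argument, by contrast, uses only that a double cover of $\mathbb{P}^1$ in characteristic $0$ is determined by its branch locus. It therefore needs no coordinate normal form. The same reasoning shows that, for any common point $P$ of two nonsingular cubics, equal branch loci of $\pi_P|_{C_1}$ and $\pi_P|_{C_2}$ force $P$ to be an SG point.
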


\begin{proof}
	Assume that (a). Then there exists a projective transformation $\phi : E_{1}\to E_{2}$ such that $\pi_{O} \circ \phi = \pi_{O}$ and $\phi (O)=O$. Thus $\phi$ is a bijective isogeny, and (b) holds. The implication (b) $\Rightarrow$ (c) is obvious.
	Next, we assume that (c). Because the ramification points of $\pi_{O}|_{E_{i}}$ ($i=1,2$) are precisely the points in $E_{i}[2]$ and $\pi_{O}|_{E_i} ( O) = (0:1)$,  the branch loci of $\pi_{O}|_{E_{1}}$ and $\pi_{O}|_{E_{2}}$ coincide. Hence the discriminants of the quadratic extensions $k(E_{1})/\pi^{\ast}_{O}k(\mathbb{P}^{1})$ and $k(E_{2})/\pi^{\ast}_{O}k(\mathbb{P}^{1})$ are equal up to multiplication by an element of $k^{\times}$. Thus $k(E_{1})$ and $k(E_{2})$ are isomorphic over $\pi^{\ast}_{O}k(\mathbb{P}^{1})$. 
\end{proof}

Next, we consider SG points which are not of the first kind.

\begin{definition}\label{Definition: S2}
	 Let $C_{1}, C_{2} \subset \mathbb{P}^{2}$ be nonsingular cubic curves such that $C:= C_{1}\cup C_{2}$ is reduced. A point $P \in \mathbb{P}^{2}$ is called an SG point of the {\it second kind} for $C$ if $P$ is an SG point but is not of the first kind. We denote by $S\Delta^2_{\mathrm{in}}(C_1, C_2) := S\Delta_{\mathrm{in}}(C_1, C_2) \setminus S\Delta^1_{\mathrm{in}}(C_1, C_2)$ the set of all inner SG points of the second kind for $C$.
\end{definition}

\begin{example} \label{Example 2nd kind}
	Consider the nonsingular cubic curves $C_1: YZ^2 = X^3 + Y^3$, $C_2: XZ^2 = a^{2}X^2Y + 2aY^2Z$ ($a \in k^{\times}$). Then $P=(0:0:1) \in S\Delta^2_{\mathrm{in}}(C_1, C_2)$. Indeed, the isomorphism $\phi : C_1 \rightarrow C_2, (X:Y:Z) \mapsto (X(-Y+Z):\, Y(-Y+Z):\, aX^2)$ satisfies $\pi_P \circ \phi = \pi_P$, and hence $P$ is an SG point for $C_1 \cup C_2$. Because $I_P(C_1, C_2) = 1$, it follows from Lemma~\ref{lemma: local intersection number} that $P$ is not of the first kind.
\end{example}

The following proposition is useful for constructing reduced plane curves with an SG point of the second kind.

\begin{proposition}\label{proposition: a SG point given by complete linear systems}
	Let $E$ be an elliptic curve given in Weierstrass form. Assume that  $P=(0:0:1)$ is a $3$-torsion point of $E$. Let $O=(0:1:0)$ and $Q = [2]P$, where $[2]: E \to E$ is the multiplication-by-$2$ map. Let $f \in k(E)$ such that $\mathop{\mathrm{div}}\nolimits (f) =O+Q+2P$, where $\mathop{\mathrm{div}}\nolimits (f)$ is the associated divisor with $f$. Then $\Phi_{1} : E \to \mathbb{P}^{2}, (X: Y: Z)\mapsto (1: Z/X: f(X, Y, Z))$ and $\Phi_{2} : E \to \mathbb{P}^{2}, (X: Y: Z)\mapsto (1: Z/X: f(X, Y, Z)^{-1})$ are birational embeddings. Furthermore, $\Phi_1(P) = \Phi_2(O)=P$.
\end{proposition}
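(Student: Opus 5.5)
The plan is to identify each $\Phi_i$ with the morphism of a complete linear system of degree $3$ on $E$. A base-point-free complete linear system of degree $3$ on an elliptic curve is very ample, so $\Phi_i$ is an isomorphism onto a nonsingular plane cubic. In particular it is a birational embedding.

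\emph{Reading of the hypothesis.} A principal divisor has degree $0$, so the degree-$4$ divisor $O+Q+2P$ cannot literally be $\mathop{\mathrm{div}}(f)$. I will therefore read the hypothesis as $\mathop{\mathrm{div}}(f)=O+Q-2P$, i.e.\ $f$ has zeros at $O,Q$ and a double pole at $P$; this is the only reading under which the stated conclusions come out. Such an $f$ exists by Abel's theorem: in the group law with origin $O$, the divisor has degree $0$ and sum $Q-2P=0$.

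\emph{Divisors of the coordinate functions.} Write $x=X/Z$, so that $Z/X=1/x$. Since $E$ is in Weierstrass form and $P=(0:0:1)$, the function $x$ has a double pole at $O$ and vanishes at $P$ and $-P$. Because $P$ is $3$-torsion, $-P=[2]P=Q$. Hence $\mathop{\mathrm{div}}(x)=P+Q-2O$. If $Q=P$ we would have $2P=P$, so $P=O$, which is false; thus $Q\ne P$.

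\emph{The map $\Phi_1=(1:1/x:f)$.} The common pole divisor of $1,1/x,f$ is $D_1=2P+Q$. The effective divisors $\mathop{\mathrm{div}}(g)+D_1$ of the three sections are:
\begin{itemize}
\item for $g=1$: $2P+Q$;
\item for $g=1/x$: $2O+P$;
\item for $g=f$: $O+2Q$.
\end{itemize}
These have no common point, so the system is base-point free. The functions $1,1/x,f$ are linearly independent because their pole orders at $P$ are $0,1,2$. Since $\ell(D_1)=\deg D_1=3$ by Riemann--Roch, they span all of $L(D_1)$. Therefore $\Phi_1$ is the morphism of the complete system $|2P+Q|$, which is an embedding. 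At $P$ the coordinate $f$ has the highest pole order, so after rescaling $\Phi_1(P)=(0:0:1)=P$.

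\emph{The map $\Phi_2=(1:1/x:1/f)$.} Here $\mathop{\mathrm{div}}(1/f)=2P-O-Q$, so the common pole divisor is $D_2=O+P+Q$. The divisors of the sections are:
\begin{itemize}
\item for $g=1$: $O+P+Q$;
\item for $g=1/x$: $3O$;
\item for $g=1/f$: $3P$.
\end{itemize}
Again there is no base point. The three functions are independent: $1/x$ and $1/f$ have distinct pole sets, $P+Q$ and $O+Q$, so no nontrivial combination of them is constant. Hence they span $L(D_2)$, and $\Phi_2$ is an embedding. At $O$, the function $1/x$ vanishes to order $2$ while $1/f$ has a simple pole, so $\Phi_2(O)=(0:0:1)=P$.

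\emph{Main obstacle.} The delicate points are the interpretation of $\mathop{\mathrm{div}}(f)$ and the correct bookkeeping of common pole divisors, in particular checking base-point freeness at $Q$. The first thing I would verify is that at $Q$ the local orders of the three sections are consistent with $D_1$ and $D_2$ above.
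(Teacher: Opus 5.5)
Your proposal is correct and follows the paper's proof: both compute $\mathop{\mathrm{div}}(x)=P+Q-2O$, identify $\Phi_1$ and $\Phi_2$ with the morphisms of the complete linear systems $|2P+Q|$ and $|O+P+Q|$, and read off $\Phi_1(P)$ and $\Phi_2(O)$ from pole orders, while you also supply the base-point-freeness, linear independence and Riemann--Roch spanning checks that the paper leaves implicit. Your reading $\mathop{\mathrm{div}}(f)=O+Q-2P$ is the intended one; it is exactly the divisor of $f=X/Y$ in the paper's subsequent example, where $\mathop{\mathrm{div}}(x)=P+Q-2O$ and $\mathop{\mathrm{div}}(y)=3P-3O$.
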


\begin{proof}
	Because $P=(0:1:0)$ is a $3$-torsion point of $E$ and $Q=[2]P$, we obtain that $\pi_{O}^{-1}\left( (0:1) \right) = \{ P, Q \}$. Hence $\mathop{\mathrm{div}}\nolimits (x) = P + Q -2O$. Let $D_1$ and $D_2$ be divisors on $E$ defined by $D_1=2P+Q$ and $D_2=P+Q+O$. Then the map $\Phi_i$ ($i=1, 2$) is a rational map associated with the complete linear system $|D_i|$, and hence $\Phi_i$ is a birational embedding. We have $\Phi_1(P) =\Phi_2(O) =P$ by considering the zeros and poles of the coordinate functions of $\Phi_i$.
\end{proof}

\begin{example} \label{Example2 2nd kind}
	Consider the elliptic curve $E: Y^2Z+aXYZ+bYZ^2=X^3$, where $a, b \in k$ with $b(a^3-27b) \neq 0$. Let $P=(0:0:1)$ and $Q=(0:-b:1)$. The point $P$ is a $3$-torsion point and $Q=[2]P$. Let $f=X/Y$. Because  $\mathop{\mathrm{div}}\nolimits (f) = O+Q+2P$, it follows from Proposition~\ref{proposition: a SG point given by complete linear systems} that $\Phi_1: E \to \mathbb{P}^{2}, (X:Y:Z) \mapsto (XY: YZ: X^2)$ and $\Phi_2: E \to \mathbb{P}^{2}, (X:Y:Z) \mapsto (X:Z:Y)$ are birational embeddings. Let $C_i = \Phi_i(E)$ ($i=1, 2$). Then $C_1: X^2Y+aXYZ + bY^2Z = XZ^2$ and $C_2: YZ^2+aXYZ + bY^2Z =X^3$. Because $\pi_{P} \circ \Phi_1 = \pi_{P} \circ \Phi_2$, the isomorphism $\phi:=\Phi_2 \circ \Phi_1^{-1}: C_1 \to C_2, (X:Y:Z) \mapsto (XZ:\, YZ:\, X^2)$ satisfies that $\pi_{P} \circ \phi = \pi_{P}$. Hence $P \in S\Delta_{\mathrm{in}}(C_1, C_2)$. The local intersection number $I_P(C_1, C_2) = 1$ implies that $P$ is an SG point of the second kind for $C_1 \cup C_2$.
\end{example}

\begin{remark}
	The problem of determining the number of SG points for reduced plane curves is of interest because it relates the defining equations of the irreducible components and thus reflects geometric properties of the curve. For the remaining case of $d=3$ (cf.~Table~\ref{table:SGpoints}), the maximum number of inner SG points of the first kind for $C=C_1 \cup C_2$ remains unknown. Moreover, it is not known whether there exists a reduced plane  curve admitting at least two inner SG points. For the SG points of the second kind, it is natural to ask how they can be characterized geometrically.
\end{remark}

\section*{Acknowledgments}
We would like to express our sincere gratitude to Prof.~Akinari Hoshi for his encouragement.  
This work was supported by JST SPRING Grant Number JPMJSP2121 and JSPS KAKENHI Grant Number JP25K06930.


\end{document}